\title[Identities for the eighth order mock theta function $V_{0}(q)$ revisited]{Short proofs of Ramanujan-like identities for the eighth order mock theta function $V_{0}(q)$}
\author{Eric T. Mortenson}
\address{Department of Mathematics and Computer Science, Saint Petersburg State University, Saint Petersburg, 199034, Russia}
\email{etmortenson@gmail.com}
\renewcommand\theta{\vartheta}
\newtheorem{theorem}{Theorem}
\newtheorem{lemma}[theorem]{Lemma}
\newtheorem{proposition}[theorem]{Proposition}
\theoremstyle{definition}
\numberwithin{theorem}{section} 
\numberwithin{equation}{section}
\begin{document}

\date{29 August 2023}

\subjclass[2020]{11F11, 11F27, 11F37}

\keywords{Appell functions, theta functions, mock theta functions}

\begin{abstract}
    Using Appell function properties we give short proofs of Ramanujan-like identities for the eighth order mock theta function $V_0(q)$ after work of Chan and Mao; Mao; and Brietzke, da Silva, and Sellars.   We also present a generalization of the identities in the spirit of celebrated results of Bringmann, Ono, and Rhoades on Dyson's ranks and Maass forms.
\end{abstract}

\maketitle

\section{Introduction}

 Let $q:=e^{2 \pi i \tau}$, $\tau\in\mathbb{H}:=\{ z\in \mathbb{C}| \textup{Im}(z)>0 \}$, and define $\mathbb{C}^*:=\mathbb{C}-\{0\}$.  We recall the $q$-Pochhammer notation:
\begin{equation*}
(x)_n=(x;q)_n:=\prod_{i=0}^{n-1}(1-q^ix), \ \ (x)_{\infty}=(x;q)_{\infty}:=\prod_{i\ge 0}(1-q^ix).
\end{equation*}

We begin with famous results of Ramanujan on congruence properties of the partition function.  A partition of a positive integer $n$ is a weakly-decreasing sequence of positive integers whose sum is equal to $n$.  The partitions of $4$ read 
\begin{equation*}
(4), \ (3,1), \ (2,2), \ (2,1,1), \ (1,1,1,1).  
\end{equation*}
We denote the number of partitions of $n$ by $p(n)$, and we define $p(0):=1$.  The following congruences were discovered and proved by Ramanujan:
\begin{align*}
p(5n+4)&\equiv 0 \pmod 5,\\
p(7n+5)&\equiv 0 \pmod 7,\\
p(11n+6)&\equiv 0 \pmod{11}.
\end{align*}
Ramanujan also found generating functions for $p(5n+4)$ and $p(7n+5)$ \cite{Ram}.  The two generating functions have elegant expressions in terms of infinite products.  In particular, we have
\begin{equation}
\sum_{n=0}^{\infty}p(5n+4)q^{n}=5\frac{(q^5;q^5)_{\infty}^5}{(q;q)_{\infty}^6},\label{equation:Ram-gen1}
\end{equation}
where as
\begin{equation}
\sum_{n=0}^{\infty}p(7n+5)q^{n}=7\frac{(q^7;q^7)_{\infty}^3}{(q;q)_{\infty}^4}
+49q\frac{(q^7;q^7)_{\infty}^7}{(q;q)_{\infty}^8}.\label{equation:Ram-gen2}
\end{equation}
Chan and Mao  \cite{CM}, Mao \cite{Mao}, and Brietzke, da Silva, and Sellars \cite{BdSS} have obtained analogous results for the eighth order mock theta function $V_{0}(q)$.  

Before we state their results, we introduce some notation.  We shall use theta functions in the form
\begin{gather*}
 j(x;q):=(x)_{\infty}(q/x)_{\infty}(q)_{\infty}=\sum_{n=-\infty}^{\infty}(-1)^nq^{\binom{n}{2}}x^n,
\end{gather*}
where the equality between product and sum follows from Jacobi's triple product identity.    We let $a$ and $m$ be integers with $m$ positive, we then define
\begin{gather*}
J_{a,m}:=j(q^a;q^m), \ \ J_m:=J_{m,3m}=\prod_{i\ge 1}(1-q^{mi}), \ {\text{and }}\overline{J}_{a,m}:=j(-q^a;q^m).
\end{gather*}
We also recall Appell functions, which are building blocks for Ramanujan's classical mock theta functions.  We will define Appell functions as follows
\begin{equation}
m(x,z;q):=\frac{1}{j(z;q)}\sum_{r=-\infty}^{\infty}\frac{(-1)^rq^{\binom{r}{2}}z^r}{1-q^{r-1}xz}.
\end{equation}
From \cite{GM} and \cite[Section 5]{HM}, we have for the eighth order mock theta function $V_0(q)$ that
\begin{align*}
V_0(q)=\sum_{n=0}^{\infty}g(n)q^{n}&:=-1+2\sum_{n\ge 0}\frac{q^{n^2}(-q;q^2)_n}{(q;q^2)_{n}}=-1+2\sum_{n\ge 0}\frac{q^{2n^2}(-q^2;q^4)_n}{(q;q^2)_{2n+1}}
\\
&\ =-q^{-1}m(1,q;q^8)-q^{-1}m(1,q^3;q^8).
\end{align*}

The results of  Chan and Mao  \cite{CM}, Mao \cite{Mao}, and Brietzke, da Silva, and Sellars \cite{BdSS} can be stated compactly as follows.  We point out that there are other results in \cite{BdSS, CM, Mao} as well.
\begin{theorem}\label{theorem:main} \cite{BdSS, CM, Mao} We have
{\allowdisplaybreaks \begin{align}
\sum_{n=0}^{\infty}g(8n)q^{n}
&=\frac{1}{J_{1}^3}\overline{J}_{1,4} \left ( \overline{J}_{1,2}^2\overline{J}_{2,4}
 +q\overline{J}_{0,2}^2\overline{J}_{0,4}\right ),\label{equation:V0-0}\\
\sum_{n=0}^{\infty}g(8n+1)q^{n}
&=\frac{2}{J_{1}^3}\overline{J}_{1,2}^3\overline{J}_{2,8} , 
\label{equation:V0-1}\\
\sum_{n=0}^{\infty}g(8n+2)q^{n}
&=\frac{4}{J_{1}^3}\overline{J}_{1,4}\overline{J}_{1,2}\overline{J}_{2,8}\overline{J}_{2,4},
 \label{equation:V0-2}\\
\sum_{n=0}^{\infty}g(8n+3)q^{n}
&=\frac{4}{J_{1}^3}
\overline{J}_{1,2}^2\overline{J}_{2,8}^2,  
 \label{equation:V0-3}\\
\sum_{n=0}^{\infty}g(8n+4)q^{n}
&=\frac{1}{J_{1}^3}\overline{J}_{1,4}\left ( \overline{J}_{1,2}^2\overline{J}_{0,4}
+ \overline{J}_{0,2}^2 \overline{J}_{2,4}\right ), \label{equation:V0-4}\\
\sum_{n=0}^{\infty}g(8n+5)q^{n}
&=\frac{8}{J_{1}^3}
\overline{J}_{2,4}\overline{J}_{1,2}\overline{J}_{2,8}\overline{J}_{4,16},
 \label{equation:V0-5} \\
\sum_{n=0}^{\infty}g(8n+6)q^{n}
&=\frac{8}{J_{1}^3}\overline{J}_{1,4}\overline{J}_{1,2}\overline{J}_{2,8}\overline{J}_{4,16}, \label{equation:V0-6}\\
%
\sum_{n=0}^{\infty}g(8n+7)q^{n}
&=-2q^{-1} m(1,-1;q)\\
&\qquad +\frac{1}{2}\frac{q^{-1}}{J_{1}^3}
\left ( \overline{J}_{1,2}^2\overline{J}_{2,4}^2 
+q\overline{J}_{0,4} \left ( 2\overline{J}_{2,4} \overline{J}_{0,2}^2
+\overline{J}_{1,2}^2\overline{J}_{0,4}
\right)\right).\notag
\end{align}}%
\end{theorem}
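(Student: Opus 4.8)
The plan is to work entirely at the level of Appell--Lerch sums, starting from the representation $V_0(q)=-q^{-1}m(1,q;q^8)-q^{-1}m(1,q^3;q^8)$ and extracting the eight progressions $g(8n+j)$ directly, rather than manipulating the $q$-series of the coefficients $g(n)$ one at a time.

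First I would observe that the base of the two Appell--Lerch sums is $q^8$, the same as the modulus of the dissection, so that forming $\sum_n g(8n+j)q^n$ (after replacing $q^8$ by $q$) amounts to collecting in each summand the exponents lying in a fixed residue class modulo $8$. For $m(1,q;q^8)$ the numerator exponent satisfies $8\binom{r}{2}+r\equiv r$, and expanding the geometric series $1/(1-q^{8r-7})$ contributes $(8r-7)s\equiv s\pmod 8$, so the residue of a term is governed simply by $r+s\bmod 8$; for $m(1,q^3;q^8)$ the analogous count gives $3(r+s)\bmod 8$, and since $3$ is invertible modulo $8$ the two sums dissect compatibly. The prefactors $1/j(q;q^8)$ and $1/j(q^3;q^8)$ obstruct a naive term-by-term split, so the real work is to avoid expanding these reciprocals.

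To do this I would use the change-of-$z$ identity for Appell--Lerch sums, which expresses a difference $m(1,z_1;q^8)-m(1,z_0;q^8)$ as an explicit theta quotient, in order to reduce both $m(1,q;q^8)$ and $m(1,q^3;q^8)$ to a single reference sum $m(1,z_0;q^8)$ plus pure theta quotients. The theta quotients are then dissected by classical theta-dissection identities together with Jacobi's triple product, and reassembled using the splitting relations among $j(\pm q^a;q^m)$ at the moduli $2,4,8,16$ --- exactly the relations that produce the products $\overline{J}_{a,m}$ and the common denominator $J_1^3=(q;q)_\infty^3$ on the right-hand sides. Matching the outcome against \eqref{equation:V0-0}--\eqref{equation:V0-6} is then a bounded, if lengthy, computation, and the reflection $m(1,q^a;q^8)=m(1,q^{-a};q^8)$ can be used to shorten several of the cases by pairing the residues $q,q^3$ with $q^{-1},q^{-3}$.

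The main obstacle I anticipate is the residue class $j=7$, the only progression whose generating function is not a pure theta quotient but retains the genuinely mock term $-2q^{-1}m(1,-1;q)$. Here the reference Appell--Lerch sum does not collapse to theta functions: after replacing $q^8$ by $q$ the surviving non-theta part must be identified precisely as $m(1,-1;q)$ in the new base, and one has to show that all remaining contributions to this residue assemble into the displayed theta combination. Controlling the signs, the $q^{-1}$ prefactor, and the splitting of the bilateral sum into its $r\ge 1$ and $r\le 0$ halves is most delicate in this case; the other seven identities are comparatively mechanical once the reduction to a single reference sum is in place.
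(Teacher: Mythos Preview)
Your plan is essentially the paper's proof: the paper takes the reference point $z_0=-1$, so that $-q^{-1}m(1,q;q^8)-q^{-1}m(1,q^3;q^8)$ becomes $-2q^{-1}m(1,-1;q^8)$ plus two explicit theta quotients via \eqref{equation:changing-z}, and since $m(1,-1;q^8)$ is a series in $q^8$, the mock piece lands entirely in the $j=7$ progression without any further identification. The one step you do not mention and that saves most of the labor is that the paper combines the two theta quotients into the single product $\tfrac{1}{2}\,\overline{J}_{1,2}\overline{J}_{2,4}^2\overline{J}_{4,8}/J_8^3$ by a Weierstrass three-term specialization (Lemma~\ref{lemma:lemma-A}) before dissecting, after which the $8$-dissection is just expanding three theta factors via \eqref{equation:jsplit-n2}--\eqref{equation:jsplit-n4} and reading off residue classes.
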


Proofs of the above identities can be found scattered across \cite{BdSS, CM, Mao}.  For example, identities (\ref{equation:V0-3}) and (\ref{equation:V0-6}) are proved in \cite{CM}, identities (\ref{equation:V0-2}) and (\ref{equation:V0-5}) are proved in \cite{Mao}, and others are found in \cite{BdSS}.  In \cite{BdSS, CM} we find classical proofs, and in \cite{Mao} we find a proof that employs mock modularity.   In this note, we present a short, elementary proof using Appell functions and their properties.

Instead of asking when the generating function for an arithmetic progression of Fourier coefficients is a simple quotient of theta functions (\ref{equation:Ram-gen1}) or a handful of simple quotients (\ref{equation:Ram-gen2}), one could ask when the generating function is modular.  We use Freeman Dyson's ranks and celebrated work of Bringmann--Ono and Bringmann--Ono--Rhoades as motivation.

Dyson conjectured a combinatorial explanation of Ramanujan's congruences for the partition function using a statistic, which he called the rank.  One defines the rank of a partition to be the largest part of the partition minus the number of parts.   The ranks of the five partitions of $4$ read $3,1,0,-1,-3$, thus giving a distribution of the partitions of $4$ into five classes of equal size depending on the value of the rank modulus $5$.  The rank explains the first two of Ramanujan's congruences, but not the third \cite{ASD}.

For a more detailed look at Dyson's ranks, we define
\begin{equation*}
N(a,M;n):=\textup{number of partitions of $n$ with rank $\equiv a \pmod M$} 
\end{equation*}
which has the property that $N(a,M;n)=N(M-a,M;n)$.  Among Dyson's conjectures one finds
\begin{gather*}
N(0,5;5m+4)=N(1,5;5m+4)=N(2,5;5m+4)\\
N(0,7;7m+5)=N(1,7;7m+5)=N(2,7;7m+5)=N(3,7;7n+5),
\end{gather*}
which together with the symmetry property give the first two of Ramanujan's congruences.   For an even more in detailed look, we define the rank-difference
\begin{equation*}
R(a,b,M,c,m;q):=\sum_{n=0}^{\infty}\left ( N(a,M;mn+c)-N(b,M;mn+c)\right )q^n, 
\end{equation*}
where $a,b,c,m,M$ are integers with $0\le a,b <M$ and $0\le c < m$.  Dyson's conjectures can also be written in terms of rank-differences that are equal to zero.
\begin{gather*}
R(0,1,5,4,5;q)=R(0,2,5,4,5;q)=0,\\
R(0,1,7,5,7;q)=R(0,2,7,5,7;q)=R(0,3,7,5,7;q).
\end{gather*}

Atkin and Swinnerton-Dyer not only proved Dyson's conjectures \cite{ASD}, but they also determined rank-differences that are equal to mock theta and theta functions.  For example, they established identities such as
\begin{gather*}
R(0,2,5,1,5;q)=\frac{(q^5;q^5)_{\infty}^2}{J_{1,5}},\\
R(1,2,5,2,5;q)=\frac{(q^5;q^5)_{\infty}^2}{J_{2,5}}.
\end{gather*}

In celebrated work, Bringmann and Ono \cite{BrO2} and later Bringman, Ono, and Rhoades \cite{BrOR} found generalizations of Atkin and Swinnerton-Dyer's results on rank-differences.  In short, for a prime $t\ge5$, $0\le r_1, r_2<t$ and $0\le d<t$, they found conditions such that
\begin{equation*}
\sum_{n=0}^{\infty}\left (N(r_1,t;tn+d)-N(r_2,t;tn+d)\right )q^{24(tn+d)-1}
\end{equation*}
is a weight $1/2$ weakly holomorphic modular form on the congruence subgroup $\Gamma_{1}(576t^6)$ \cite[Theorem 1.1]{BrOR}.  One also finds an approach to this result using new Appell function properties \cite[Section 3]{HM} in \cite{HM2}.

Using the approach of \cite{HM2}, we can generalize Theorem \ref{theorem:main}  to results in the spirit of those found in \cite{BrO2, BrOR}, but we will keep our discussion within the setting of $q$-series.
\begin{theorem}\label{theorem:general} For $p\ge3$ an odd integer, we have
\begin{align*}
V_{0}(q)
&=-2\sum_{r=0}^{p-1}(-1)^{r}q^{-(2r+1)^2}m\Big (q^{8p\left (\tfrac{p-1}{2}-r\right )},-1;q^{8p^2}\Big )\\
&\qquad +\frac{J_{8p}^3}{J_{1,8}\overline{J}_{0,8p^2}}\sum_{r=0}^{p-1}
\frac{(-1)^rq^{4r(r-p)-1}q^{r-(p-1)/2}j(-q^{8r+1};q^{8p})j(-q^{p(8r+1)};q^{8p^2})}{\overline{J}_{0,8p}j(q^{8r+1};q^{8p})}\\
&\qquad +\frac{J_{8p}^3}{J_{3,8}\overline{J}_{0,8p^2}}\sum_{r=0}^{p-1}
\frac{(-1)^rq^{4r(r-p)-1}q^{3r-3(p-1)/2}j(-q^{8r+3};q^{8p})j(-q^{p(8r+3)};q^{8p^2})}{\overline{J}_{0,8p}j(q^{8r+3};q^{8p})}.
\end{align*}
\end{theorem}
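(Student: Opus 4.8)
The plan is to start from the Appell–Lerch representation recorded above,
$$V_0(q) = -q^{-1}m(1,q;q^8) - q^{-1}m(1,q^3;q^8),$$
and to push the modulus from $q^8$ up to $q^{8p^2}$ by a $p$-fold dissection of each Appell function, exactly the mechanism used in \cite{HM2}. The engine is the change-of-modulus property for $m(x,z;q)$ from \cite[Section 3]{HM}: dissecting the defining sum of $m(x,z;q)$ into residue classes $r=0,\dots,p-1$ rewrites it as a finite sum $\sum_{r=0}^{p-1}(\text{monomial in }q,x)\,m(x^p q^{\star}, z^{\star};q^{p^2})$ plus a correction that is itself a sum of theta quotients. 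Applying this to $m(1,q;q^8)$ and to $m(1,q^3;q^8)$ — here the base modulus is $q^8$, so $q^{p^2}$ becomes $q^{8p^2}$ and the level theta functions acquire modulus $q^{8p}$ — produces Appell functions whose first arguments are $q^{8p((p-1)/2-r)}$, the dissection monomials combining with the global $-q^{-1}$ into $-(-1)^r q^{-(2r+1)^2}$ after writing $(2r+1)^2 = 4r(r+1)+1$.

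The second step is to normalize the inner argument of every resulting Appell function to $z=-1$ by the change-of-$z$ formula from \cite{HM}, namely
$$m(x,z_1;q)-m(x,z_0;q)=\frac{z_0 J_1^3\, j(z_1/z_0;q)\, j(xz_0z_1;q)}{j(z_0;q)\,j(z_1;q)\,j(xz_0;q)\,j(xz_1;q)}.$$
The point is that $m\big(q^{8p((p-1)/2-r)},-1;q^{8p^2}\big)$ no longer remembers whether it came from the $z=q$ family or the $z=q^3$ family; since the first argument depends only on the original $x=1$ and on $r$, the two contributions coincide and merge, which is the source of the overall factor $2$ in the first line of the statement. All of the dependence on the original inner argument ($q$ versus $q^3$) is pushed into the theta quotient corrections, and this is precisely why the final answer carries two separate theta sums, one built from the residues $8r+1$ and one from the residues $8r+3$.

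The third and final step is to assemble every theta quotient — those produced directly by the modulus change together with those spun off by the change-of-$z$ normalization — and collapse each family into the single displayed product. This is carried out term by term using standard theta manipulations: the shift rule $j(q^n x;q)=(-1)^n q^{-\binom{n}{2}}x^{-n}j(x;q)$, the inversion $j(x;q)=-x\,j(x^{-1};q)=j(q/x;q)$, the triple product, and the dictionary between $j(\pm q^a;q^m)$ and the symbols $J_{a,m}$, $\overline{J}_{a,m}$, $J_m$. After clearing a common denominator and tracking the accumulated $q$-powers, each summand should reduce to $(-1)^r q^{4r(r-p)-1}q^{r-(p-1)/2}\,j(-q^{8r+1};q^{8p})\,j(-q^{p(8r+1)};q^{8p^2})$ over $\overline{J}_{0,8p}\,j(q^{8r+1};q^{8p})$, with global prefactor $J_{8p}^3/(J_{1,8}\overline{J}_{0,8p^2})$, and likewise with $8r+1$ replaced by $8r+3$.

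I expect the bookkeeping in this last step to be the main obstacle. The modulus change and the change-of-$z$ each generate several theta quotients whose arguments and $q$-powers must be matched exactly, and verifying that the sum of these pieces telescopes into one product per residue $r$ — rather than leaving extra theta terms behind — is where the delicate exponent arithmetic (the $q^{4r(r-p)-1}$ and $q^{r-(p-1)/2}$ factors) lives. A useful consistency check throughout is to specialize $p$ to a small odd value and compare the coefficient extraction against the eight identities of Theorem \ref{theorem:main}, and to confirm the symmetry $r\leftrightarrow p-1-r$ visible in the Appell arguments $q^{8p((p-1)/2-r)}$.
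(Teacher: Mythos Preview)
Your plan is essentially the paper's proof, but you are making it harder than it is. The key formula you need from \cite[Section 3]{HM} is Corollary~3.6 there: for odd $p$ and generic $x,z,z'$,
\begin{align*}
m(x,z;q)&=\sum_{r=0}^{p-1}q^{-\binom{r+1}{2}}(-x)^r\,m\Big(q^{\binom{p}{2}-pr}x^p,\,z';\,q^{p^2}\Big)\\
&\quad+\frac{z'J_p^3}{j(xz;q)\,j(z';q^{p^2})}\sum_{r=0}^{p-1}
\frac{q^{r(r-p)/2}(-x)^r z^{r-(p-1)/2}\,j(q^r x^p z z';q^p)\,j(q^{pr}z^p/z';q^{p^2})}{j(x^p z';q^p)\,j(q^r z;q^p)}.
\end{align*}
This single identity already packages your Steps~1 and~2 together: the target inner argument $z'$ is a free parameter, so you may set $z'=-1$ from the outset rather than dissecting first and then invoking the change-of-$z$ formula separately. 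With $q\to q^8$, $x=1$, $z'=-1$ and then $z=q$ or $z=q^3$, the Appell summand becomes $(-1)^r q^{-4r(r+1)}\,m\big(q^{8p((p-1)/2-r)},-1;q^{8p^2}\big)$, and after the global $-q^{-1}$ the prefactor is exactly $-(-1)^r q^{-(2r+1)^2}$; the two specializations give identical Appell sums, producing the factor~$2$.

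Your anticipated ``main obstacle'' in Step~3 does not exist. The theta-quotient correction in Corollary~3.6, specialized as above, is \emph{already} in the exact form displayed in the theorem: for $z=q$ one reads off $j(z;q^8)=J_{1,8}$, $q^{8r}z=q^{8r+1}$, $q^{8pr}z^p=q^{p(8r+1)}$, $j(-1;q^{8p})=\overline{J}_{0,8p}$, $j(-1;q^{8p^2})=\overline{J}_{0,8p^2}$, and the summand is literally the one in the statement. No telescoping, no collapsing of multiple quotients, no nontrivial theta manipulations are required---just the substitution and the multiplication by $-q^{-1}$.
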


To relate this to Theorem \ref{theorem:main}, we note that if for all $r\in \{ 0, 1, \dots p-1\}$, we have that
\begin{equation*}
\alpha\not \equiv -(2r+1)^2 \pmod{8p},
\end{equation*}
then
\begin{equation*}
\sum_{n=0}^{\infty}g(8pn+\alpha)q^{8pn+\alpha}
\end{equation*}
only receives contributions from the two sums consisting of theta functions.  If we consider $p=5$, then for  $\alpha \not \equiv   15, 31, 39 \pmod{40}$,  it turns out that 
\begin{equation*}
\sum_{n=0}^{\infty}g(40n+\alpha)q^{40n+\alpha}
\end{equation*}
only receives contributions from the theta functions.

\section{Preliminaries}\label{section:prelim}
We will frequently use the following identities without mention.
\begin{subequations}
{\allowdisplaybreaks \begin{gather}
\overline{J}_{0,1}=2\overline{J}_{1,4}=\frac{2J_2^2}{J_1},  \ \overline{J}_{1,2}=\frac{J_2^5}{J_1^2J_4^2},   \ J_{1,2}=\frac{J_1^2}{J_2},   \ \overline{J}_{1,3}=\frac{J_2J_3^2}{J_1J_6}, \notag\\
J_{1,4}=\frac{J_1J_4}{J_2},  \  J_{1,6}=\frac{J_1J_6^2}{J_2J_3},   \ \overline{J}_{1,6}=\frac{J_2^2J_3J_{12}}{J_1J_4J_6}.\notag
\end{gather}}%
\end{subequations}
Also following from the definitions are the general identities:
\begin{subequations}
{\allowdisplaybreaks \begin{gather}
j(q^n x;q)=(-1)^nq^{-\binom{n}{2}}x^{-n}j(x;q), \ \ n\in\mathbb{Z},\label{equation:1.8}\\
j(x;q)=j(q/x;q)=-xj(x^{-1};q)\label{equation:1.7},\\
j(x;q)={J_1}j(x,qx,\dots,q^{n-1}x;q^n)/{J_n^n} \ \ {\text{if $n\ge 1$,}}\label{equation:1.10}\\
j(z;q)=\sum_{k=0}^{m-1}(-1)^k q^{\binom{k}{2}}z^k
j\big ((-1)^{m+1}q^{\binom{m}{2}+mk}z^m;q^{m^2}\big ).\label{equation:jsplit}
\end{gather}}%
\end{subequations}
\noindent  We have two useful specializations of (\ref{equation:jsplit}):
\begin{gather}
j(z;q)=j(-qz^2;q^4)-zj(-q^3z^2;q^4),
\label{equation:jsplit-n2}\\
j(z;q)=j(-q^6z^4;q^{16})-zj(-q^{10}z^4;q^{16})+qz^2j(-q^{14}z^4;q^{16})
-q^3z^3j(-q^{18}z^4;q^{16})
\label{equation:jsplit-n4}.
\end{gather}

We have the three-term Weierstrass relation for theta functions \cite[(1.)]{We}, \cite{Ko}:  For generic $a,b,c,d\in \mathbb{C}^*$
\begin{align}
j(ac;q)j(a/c;q)j(bd;q)j(b/d;q)&=j(ad;q)j(a/d;q)j(bc;q)j(b/c;q)
\label{equation:Weier-3term}\\
&\qquad +b/c \cdot j(ab;q)j(a/b;q)j(cd;q)j(c/d;q).\notag
\end{align}

We will also need the following:
\begin{proposition} [Theorem $1.1$ \cite{H1}] For generic $x,y,z\in \mathbb{C}^*$ 
 \begin{gather}
j(x;q)j(y;q)=j(-xy;q^2)j(-qx^{-1}y;q^2)-xj(-qxy;q^2)j(-x^{-1}y;q^2),
\label{equation:H1Thm1.1}
\end{gather}
\end{proposition}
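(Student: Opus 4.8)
The plan is to prove the identity by a brute-force expansion of the left-hand side as a double theta sum, followed by a parity split of the summation lattice. Writing $j(x;q)=\sum_{m}(-1)^mq^{\binom{m}{2}}x^m$ and likewise for $y$, I would start from
\[
j(x;q)j(y;q)=\sum_{m,n\in\Z}(-1)^{m+n}q^{\binom{m}{2}+\binom{n}{2}}x^my^n,
\]
and record the elementary identity $\binom{m}{2}+\binom{n}{2}=\tfrac12(m^2+n^2)-\tfrac12(m+n)$. Since the target identity has base $q^2$, the natural move is to split the lattice $\Z^2$ according to the parity of $m+n$ (equivalently of $m-n$); this is precisely the device that converts a base-$q$ sum into base-$q^2$ sums, and it produces exactly two pieces, matching the two terms on the right.

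First I would treat the even case $m+n\equiv0\smod{2}$, parametrizing $m=A+B$, $n=A-B$ with $A,B\in\Z$, which biject onto the even sublattice. A short computation gives $m^2+n^2=2A^2+2B^2$ and $m+n=2A$, so the exponent becomes $A^2-A+B^2$, while $(-1)^{m+n}=1$ and $x^my^n=(xy)^A(x/y)^B$. The double sum then factors as a product of two single sums; recognizing $A^2-A=2\binom{A}{2}$ identifies the first factor as $j(-xy;q^2)$, leaving $\sum_B q^{B^2}(x/y)^B$. Rewriting $q^{B^2}=q^{2\binom{B}{2}}q^{B}$ turns the second factor into $j(-qx/y;q^2)$, and one application of the symmetry $j(z;q^2)=j(q^2/z;q^2)$ from \eqref{equation:1.7} rewrites it as $j(-qx^{-1}y;q^2)$, matching the first term on the right-hand side.

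Next I would treat the odd case $m+n\equiv1\smod{2}$ with $m=A+B+1$, $n=A-B$. The same bookkeeping gives exponent $A^2+B^2+B$, sign $(-1)^{m+n}=-1$, and $x^my^n=x\,(xy)^A(x/y)^B$; the extra factor $x$ and the minus sign are exactly what produce the structure of the second term. Factoring and collecting powers of $q$ as before yields $-x\,j(-qxy;q^2)\,j(-q^2x/y;q^2)$, and a final use of \eqref{equation:1.7} rewrites $j(-q^2x/y;q^2)$ as $j(-x^{-1}y;q^2)$. Adding the two cases reproduces the claimed identity. The argument is elementary throughout, so the only real care needed lies in the exponent arithmetic and in checking that each parametrization $(A,B)$ maps bijectively onto its sublattice; the symmetry steps are routine but essential for presenting the answer in the stated normalized form.
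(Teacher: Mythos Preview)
Your argument is correct: the parity split of the lattice $\Z^2$ according to $m+n\pmod 2$, together with the change of variables $m=A+B\,(+1)$, $n=A-B$, cleanly factors the double sum into the two claimed products, and the two applications of \eqref{equation:1.7} are routine and correctly carried out. Note, however, that the paper itself does not supply a proof of this proposition; it is quoted from Hickerson \cite{H1} as a known preliminary. Your derivation is in fact the standard one (and essentially Hickerson's original), so there is no meaningful methodological contrast to draw here.
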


\begin{lemma} \label{lemma:lemma-A}We have
\begin{equation}
\overline{J}_{1,8}^2 J_{3,8}^2+\overline{J}_{3,8}^2 J_{1,8}^2
 = \overline{J}_{0,8}J_{4,8}^2\overline{J}_{2,8}.\label{equation:lemma-A1}
\end{equation}
\end{lemma}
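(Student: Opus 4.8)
The statement has exactly the shape of the three-term Weierstrass relation (\ref{equation:Weier-3term}): one product of four theta functions equals a sum of two such products. The plan is therefore to realize (\ref{equation:lemma-A1}) as a single instance of (\ref{equation:Weier-3term}) taken with nome $q^8$, by reverse-engineering the four parameters $a,b,c,d$ so that the three product terms of the relation reproduce, respectively, the right-hand side and the two summands on the left-hand side of (\ref{equation:lemma-A1}).

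Concretely, I would take (with $q$ replaced by $q^8$ throughout, and $i^2=-1$)
\begin{equation*}
a=iq^2,\quad b=iq^3,\quad c=iq^{-2},\quad d=-iq.
\end{equation*}
With this choice $ac=-1$, $a/c=q^4$, $bd=q^4$, $b/d=-q^2$, so the left side of (\ref{equation:Weier-3term}) becomes $\overline{J}_{0,8}J_{4,8}^2\overline{J}_{2,8}$, matching the right side of (\ref{equation:lemma-A1}). For the first product on the right of (\ref{equation:Weier-3term}) one computes $ad=q^3$, $a/d=-q$, $bc=-q$, and $b/c=q^5$; after reducing $j(q^5;q^8)=J_{3,8}$ via (\ref{equation:1.7}) this term collapses to $\overline{J}_{1,8}^2J_{3,8}^2$. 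For the second product, $ab=-q^5$, $a/b=q^{-1}$, $cd=q^{-1}$, $c/d=-q^{-3}$, and the prefactor is $b/c=q^5$; using (\ref{equation:1.7}) and (\ref{equation:1.8}) to rewrite $j(-q^5;q^8)=\overline{J}_{3,8}$, $j(q^{-1};q^8)=-q^{-1}J_{1,8}$, and $j(-q^{-3};q^8)=q^{-3}\overline{J}_{3,8}$, the powers of $q$ and the stray factors of $i$ cancel and this term collapses to $\overline{J}_{3,8}^2J_{1,8}^2$. Summing the two reproduces the left side of (\ref{equation:lemma-A1}).

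The only genuine work is locating the parameters; once guessed, the verification is a bookkeeping exercise in the elementary symmetries (\ref{equation:1.7}) and (\ref{equation:1.8}). I expect the main pitfall to be the sign and $q$-power accounting in the second summand, where the prefactor $b/c=q^5$ must exactly cancel the negative powers of $q$ arising from $j(q^{-1};q^8)$ (appearing twice) and from $j(-q^{-3};q^8)$; tracking this carefully is what forces the two factors of $i$ to disappear and leaves the clean coefficient $1$. Finally, since $a,b,c,d$ are nonzero, the specialization of the generic relation (\ref{equation:Weier-3term}) is legitimate, which completes the argument.
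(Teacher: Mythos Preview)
Your proposal is correct and follows the same approach as the paper: a single specialization of the Weierstrass three-term relation (\ref{equation:Weier-3term}) with $q\to q^8$. The paper's choice of parameters is the real quadruple $(a,b,c,d)=(-q^4,-q^3,q^3,-1)$ rather than your complex one, which sidesteps the bookkeeping with negative $q$-powers (and incidentally, in your computation no stray factors of $i$ actually appear---all the products and ratios $ab,a/b,\dots$ are already real---so the remark about ``two factors of $i$'' is a harmless slip).
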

\begin{proof}[Proof of Lemma \ref{lemma:lemma-A}] We use (\ref{equation:Weier-3term}) with $q\to q^{8}$, $a\to -q^{4}$, $b\to -q^{3}$, $c\to q^{3}$, $d\to -1$.
\end{proof}

\begin{lemma} \label{lemma:lemma-B} We have
\begin{gather}
\overline{J}_{16,32}^2+q^{8}\overline{J}_{0,32}^2
= \overline{J}_{8,16}^2,\label{equation:lemma-B1}\\
2\overline{J}_{8,16}^2\overline{J}_{0,32}\overline{J}_{16,32}
+ \overline{J}_{16,32}^2\overline{J}_{0,16}^2
+q^{8}\overline{J}_{0,32}^2\overline{J}_{0,16}^2
=2\overline{J}_{8,16}^2\overline{J}_{0,16}^2.\label{equation:lemma-B2}
\end{gather}
\end{lemma}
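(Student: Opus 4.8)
The plan is to derive both identities from the product-to-sum formula (\ref{equation:H1Thm1.1}) evaluated at $q\to q^{16}$, and then to feed the first identity into the second. I do not expect a serious obstacle here: the whole argument is a matter of choosing the right specializations in (\ref{equation:H1Thm1.1}) and carefully tracking the base changes and signs when reducing the resulting theta functions to the $\overline{J}_{a,m}$ notation.

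For (\ref{equation:lemma-B1}), I would apply (\ref{equation:H1Thm1.1}) with $q\to q^{16}$ and $x\to -q^8$, $y\to -q^8$. The left-hand side becomes $j(-q^8;q^{16})^2=\overline{J}_{8,16}^2$. On the right, the first product collapses to $j(-q^{16};q^{32})^2=\overline{J}_{16,32}^2$, while the second product produces $j(-q^{32};q^{32})\,j(-1;q^{32})$. Here the factor $j(-q^{32};q^{32})$ must be rewritten as $\overline{J}_{0,32}$ using (\ref{equation:1.8}) with $n=1$, and the prefactor $-x=q^8$ supplies the coefficient $q^8$. Collecting terms yields exactly $\overline{J}_{8,16}^2=\overline{J}_{16,32}^2+q^8\overline{J}_{0,32}^2$.

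For (\ref{equation:lemma-B2}), I would first record the companion identity obtained from (\ref{equation:H1Thm1.1}) at $q\to q^{16}$ with $x\to -1$, $y\to -1$, namely
\[
\overline{J}_{0,16}^2=2\overline{J}_{0,32}\overline{J}_{16,32}.
\]
Next I would use (\ref{equation:lemma-B1}) to combine the last two terms on the left-hand side of (\ref{equation:lemma-B2}):
\[
\overline{J}_{16,32}^2\overline{J}_{0,16}^2+q^8\overline{J}_{0,32}^2\overline{J}_{0,16}^2
=\overline{J}_{0,16}^2\big(\overline{J}_{16,32}^2+q^8\overline{J}_{0,32}^2\big)
=\overline{J}_{0,16}^2\overline{J}_{8,16}^2.
\]
The left-hand side of (\ref{equation:lemma-B2}) then factors as $\overline{J}_{8,16}^2\big(2\overline{J}_{0,32}\overline{J}_{16,32}+\overline{J}_{0,16}^2\big)$, and substituting the companion identity turns the parenthesis into $2\overline{J}_{0,16}^2$, producing the desired right-hand side $2\overline{J}_{8,16}^2\overline{J}_{0,16}^2$.

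The only delicate point throughout is correctly tracking the base changes $q\to q^{16}$ and $q^2\to q^{32}$ together with the sign and shift conventions in (\ref{equation:H1Thm1.1}) and (\ref{equation:1.8}). As a sanity check, the companion identity $\overline{J}_{0,16}^2=2\overline{J}_{0,32}\overline{J}_{16,32}$ can also be verified directly from the preliminary product evaluations: replacing $q$ by $q^{16}$ and $q^{32}$ in $\overline{J}_{0,1}=2J_2^2/J_1$ gives $\overline{J}_{0,16}=2J_{32}^2/J_{16}$ and $\overline{J}_{0,32}=2J_{64}^2/J_{32}$, while replacing $q$ by $q^{16}$ in $\overline{J}_{1,2}=J_2^5/(J_1^2J_4^2)$ gives $\overline{J}_{16,32}=J_{32}^5/(J_{16}^2J_{64}^2)$, and the identity reduces to $4J_{32}^4/J_{16}^2=4J_{32}^4/J_{16}^2$.
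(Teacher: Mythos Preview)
Your proof is correct and follows essentially the same route as the paper. For (\ref{equation:lemma-B1}) the paper also specializes (\ref{equation:H1Thm1.1}) at $q\to q^{16}$, $x=y=-q^{8}$; for (\ref{equation:lemma-B2}) the paper likewise applies (\ref{equation:lemma-B1}) and factors out $\overline{J}_{8,16}^2$, then finishes with the product rearrangements $\overline{J}_{0,32}\overline{J}_{16,32}=\overline{J}_{0,16}\overline{J}_{16,64}$ and $2\overline{J}_{16,64}=\overline{J}_{0,16}$, whose composite is exactly your companion identity $\overline{J}_{0,16}^2=2\overline{J}_{0,32}\overline{J}_{16,32}$.
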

\begin{proof}[Proof of Lemma \ref{lemma:lemma-B}]  For (\ref{equation:lemma-B1}), we use (\ref{equation:H1Thm1.1}) with $q\to q^{16}$, $x\to -q^{8}$, $y\to -q^{8}$.  For (\ref{equation:lemma-B2}), we proceed as follows:
{\allowdisplaybreaks \begin{align*}
2\overline{J}_{8,16}^2\overline{J}_{0,32}\overline{J}_{16,32}
+ \overline{J}_{16,32}^2\overline{J}_{0,16}^2
+q^{8}\overline{J}_{0,32}^2\overline{J}_{0,16}^2
&=2\overline{J}_{8,16}^2\overline{J}_{0,32}\overline{J}_{16,32}
+ \overline{J}_{0,16}^2\overline{J}_{8,16}^2\\
&=\overline{J}_{8,16}^2\left( 2\overline{J}_{0,32}\overline{J}_{16,32}+ \overline{J}_{0,16}^2\right)\\
&=\overline{J}_{8,16}^2\left( 2\overline{J}_{0,16}\overline{J}_{16,64}
+  \overline{J}_{0,16}^2\right)\\
&=2\overline{J}_{8,16}^2\overline{J}_{0,16}^2,
\end{align*}}%
where the first equality follows from (\ref{equation:lemma-B1}) and the third and fourth equalities follow from product rearrangements.
\end{proof}

We also have the Appell function property \cite{HM, Zw2}
\begin{equation}
m(x,z_1;q)-m(x,z_0;q)=\frac{z_0(q)_{\infty}^3j(z_1/z_0;q)j(xz_0z_1;q)}{j(z_0;q)j(z_1;q)j(xz_0;q)j(xz_1;q)}. \label{equation:changing-z}
\end{equation}

\section{Proof of Theorem \ref{theorem:main}}

We begin by using (\ref{equation:changing-z}) and collecting terms
{\allowdisplaybreaks \begin{align*}
V_0(q)&=-q^{-1}m (1,q;q^8 )-q^{-1}m(1,q^3;q^8 )\\
&=-q^{-1}\left( m(1,-1;q^8)-\frac{J_{8}^3\overline{J}_{1,8}^2}{J_{1,8}^2\overline{J}_{0,8}^2}
+m(1,-1;q^8)-\frac{J_{8}^3\overline{J}_{3,8}^2}{J_{3,8}^2\overline{J}_{0,8}^2}\right)\\
&=-2q^{-1} m(1,-1;q^8)+q^{-1}\frac{J_{8}^3}{\overline{J}_{0,8}^2}
\frac{1}{J_{1,8}^2J_{3,8}^2}
\left ( \overline{J}_{1,8}^2 J_{3,8}^2 + \overline{J}_{3,8}^2 J_{1,8}^2 \right ).
\end{align*}}%
Using (\ref{equation:1.10}) we rewrite a product of theta functions in the denominator to get
{\allowdisplaybreaks \begin{align*}
V_0(q)&
=-2q^{-1} m(1,-1;q^8)+q^{-1}\frac{J_{8}^3}{\overline{J}_{0,8}^2}\frac{1}{J_{1,4}^2}\frac{J_{4}^2}{J_{8}^4}
\left ( \overline{J}_{1,8}^2 J_{3,8}^2 + \overline{J}_{3,8}^2 J_{1,8}^2 \right ).
\end{align*}}%
Using Lemma \ref{lemma:lemma-A} we obtain
{\allowdisplaybreaks \begin{align*}
V_0(q)&
=-2q^{-1} m(1,-1;q^8)+q^{-1}\frac{J_{8}^3}{\overline{J}_{0,8}^2}\frac{1}{J_{1,4}^2}\frac{J_{4}^2}{J_{8}^4}
\left ( \overline{J}_{0,8}J_{4,8}^2\overline{J}_{2,8}\right ) \\
&=-2q^{-1} m(1,-1;q^8)+q^{-1}\frac{1}{2}\frac{\overline{J}_{1,2}\overline{J}_{2,4}^2\overline{J}_{4,8}}{J_8^3}.
\end{align*}}%
We expand $\overline{J}_{1,2}$ with (\ref{equation:jsplit-n4}) and $\overline{J}_{2,4}$ and $\overline{J}_{4,8}$ with (\ref{equation:jsplit-n2}) to obtain
{\allowdisplaybreaks \begin{align*}
V_0(q)
&=-2q^{-1} m(1,-1;q^8)+q^{-1}\frac{1}{2}\frac{\overline{J}_{1,2}\overline{J}_{2,4}^2\overline{J}_{4,8}}{J_8^3}\\
&=-2q^{-1} m(1,-1;q^8)\\
&\qquad +\frac{1}{2}\frac{q^{-1}}{J_{8}^3}
\left ( \overline{J}_{16,32}+q\overline{J}_{24,32}+q^4\overline{J}_{0,32}+q^9\overline{J}_{40,32}\right ) 
\left ( \overline{J}_{8,16}+q^2\overline{J}_{16,16}\right )^2\\
&\qquad \qquad \cdot \left ( \overline{J}_{16,32}+q^4\overline{J}_{32,32}\right ).
\end{align*}}%
We simplify using Identity (\ref{equation:1.8})
{\allowdisplaybreaks \begin{align*}
V_0(q)
&=-2q^{-1} m(1,-1;q^8)\\
&\qquad +\frac{1}{2}\frac{q^{-1}}{J_{8}^3}
\left ( \overline{J}_{16,32}+2q \overline{J}_{8,32}+q^4\overline{J}_{0,32}\right ) 
\left ( \overline{J}_{8,16}+q^2\overline{J}_{0,16}\right )^2
\left ( \overline{J}_{16,32}+q^4\overline{J}_{0,32}\right ).
\end{align*}}
Then we distribute the products to get
{\allowdisplaybreaks \begin{align*}
V_0(q)
&=-2q^{-1} m(1,-1;q^8)\\
&\qquad +\frac{1}{2}\frac{q^{-1}}{J_{8}^3}
\Big ( \overline{J}_{8,16}^2\overline{J}_{16,32}^2 
+q^8\overline{J}_{0,32} \left ( 2\overline{J}_{16,32} \overline{J}_{0,16}^2
+\overline{J}_{8,16}^2\overline{J}_{0,32}
\right)\\
&\qquad \qquad 
+2q \left ( \overline{J}_{8,16}^2\overline{J}_{8,32}\overline{J}_{16,32}
 +q^8\overline{J}_{8,32}\overline{J}_{0,16}^2\overline{J}_{0,32}\right )\\
&\qquad \qquad
+2q^2\left (\overline{J}_{16,32}^2\overline{J}_{8,16}\overline{J}_{0,16}
+q^{8}\overline{J}_{0,32}\overline{J}_{8,16}\overline{J}_{0,16}\overline{J}_{0,32}\right )\\
&\qquad \qquad 
+4q^3\overline{J}_{8,32}\overline{J}_{8,16}\overline{J}_{0,16}\overline{J}_{16,32}\\
&\qquad \qquad 
+q^4\left( 2\overline{J}_{8,16}^2\overline{J}_{0,32}\overline{J}_{16,32}
+ \overline{J}_{16,32}^2\overline{J}_{0,16}^2
+q^{8}\overline{J}_{0,32}^2\overline{J}_{0,16}^2\right ) \\
& \qquad \qquad 
+2q^5 \overline{J}_{8,32}\left ( \overline{J}_{8,16}^2\overline{J}_{0,32}
+ \overline{J}_{0,16}^2 \overline{J}_{16,32}
\right) \\
&\qquad \qquad 
+4q^6 
\overline{J}_{16,32}\overline{J}_{8,16}\overline{J}_{0,16}\overline{J}_{0,32} \\
&\qquad\qquad
+4q^7\overline{J}_{8,32}\overline{J}_{8,16}\overline{J}_{0,16}\overline{J}_{0,32}
\Big),
\end{align*}}%
where we have grouped terms according to the values of $q$-exponents modulo $8$.  From this we have
{\allowdisplaybreaks \begin{align*}
\sum_{n=0}^{\infty}g(8n)q^{8n}
&=\frac{1}{J_{8}^3}\overline{J}_{8,32} \left ( \overline{J}_{8,16}^2\overline{J}_{16,32}
 +q^8\overline{J}_{0,16}^2\overline{J}_{0,32}\right ),\\
\sum_{n=0}^{\infty}g(8n+1)q^{8n+1}
&=\frac{q}{J_{8}^3}\overline{J}_{8,16}\overline{J}_{0,16}\left (\overline{J}_{16,32}^2
+q^{8}\overline{J}_{0,32}^2\right ),\\
\sum_{n=0}^{\infty}g(8n+2)q^{8n+2}
&=\frac{2q^2}{J_{8}^3}\overline{J}_{8,32}\overline{J}_{8,16}\overline{J}_{0,16}\overline{J}_{16,32},\\
\sum_{n=0}^{\infty}g(8n+3)q^{8n+3}
&=\frac{1}{2}\frac{q^3}{J_{8}^3}\left( 2\overline{J}_{8,16}^2\overline{J}_{0,32}\overline{J}_{16,32}
+ \overline{J}_{16,32}^2\overline{J}_{0,16}^2
+q^{8}\overline{J}_{0,32}^2\overline{J}_{0,16}^2\right ),\\
\sum_{n=0}^{\infty}g(8n+4)q^{8n+4}
&=\frac{q^4}{J_{8}^3}\overline{J}_{8,32}\left ( \overline{J}_{8,16}^2\overline{J}_{0,32}
+ \overline{J}_{0,16}^2 \overline{J}_{16,32}\right ),\\
\sum_{n=0}^{\infty}g(8n+5)q^{8n+5}
&=\frac{2q^5}{J_{8}^3}
\overline{J}_{16,32}\overline{J}_{8,16}\overline{J}_{0,16}\overline{J}_{0,32},\\
\sum_{n=0}^{\infty}g(8n+6)q^{8n+6}
&=\frac{2q^6}{J_{8}^3}\overline{J}_{8,32}\overline{J}_{8,16}\overline{J}_{0,16}\overline{J}_{0,32},\\
\sum_{n=0}^{\infty}g(8n+7)q^{8n+7}
&=-2q^{-1} m(1,-1;q^8)\\
&\qquad +\frac{1}{2}\frac{q^{-1}}{J_{8}^3}
\left ( \overline{J}_{8,16}^2\overline{J}_{16,32}^2 
+q^8\overline{J}_{0,32} \left ( 2\overline{J}_{16,32} \overline{J}_{0,16}^2
+\overline{J}_{8,16}^2\overline{J}_{0,32}
\right)\right).
\end{align*}}%
To obtain the final forms found in Theorem \ref{theorem:main}, we use Identity (\ref{equation:lemma-B1}) for the second equality and Identity (\ref{equation:lemma-B2}) for the fourth equality.

\section{Proof of Theorem \ref{theorem:general}}

Let $p$ be a positive odd integer.   For generic $x,z,z'\in \mathbb{C}^*$  \cite[Corollary $3.6$]{HM} 
{\allowdisplaybreaks \begin{align}
m(x,&z;q)=\sum_{r=0}^{p-1}q^{-\binom{r+1}{2}}(-x)^rm\Big (q^{\binom{p}{2}-pr}x^p,z';q^{p^2}\Big )\\
&+\frac{z'J_p^3}{j(xz;q)j(z';q^{p^2})}\sum_{r=0}^{p-1}
\frac{q^{r(r-p)/2}(-x)^rz^{r-(p-1)/2}j(q^rx^pzz';q^p)j(q^{pr}z^p/z';q^{p^2})}{j(x^pz';q^p)j(q^rz;q^p)}.\notag
\end{align}
We further specialize the above theorem to
{\allowdisplaybreaks \begin{align*}
m&(1,z;q^8)\\
&=\sum_{r=0}^{p-1}q^{-4r(r+1)}(-1)^rm\Big (q^{4p(p-1)-8pr},-1;q^{8p^2}\Big )\\
&\qquad -\frac{J_{8p}^3}{j(z;q^{8})\overline{J}_{0,8p^2}}\sum_{r=0}^{p-1}
\frac{q^{4r(r-p)}(-1)^rz^{r-(p-1)/2}j(-q^{8r}z;q^{8p})j(-q^{8pr}z^p;q^{8p^2})}{\overline{J}_{0,8p}j(q^{8r}z;q^{8p})}.
\end{align*}
We recall that \cite[Section 5]{HM}
\begin{align*}
V_{0}(q)&=-q^{-1}m(1,q;q^8)-q^{-1}m(1,q^3;q^8),
\end{align*}
so it suffices to compute the two specializations for $z=q$ and $q^3$ and then add them in order to finish the proof.

\section*{Acknowledgements}
The work is supported by the Ministry of Science and Higher Education of the Russian
Federation (agreement no. 075-15-2022-287).

\end{document}